\theoremstyle{thmstyleone}%
\newtheorem{corollary}{Corollary}[theorem]
\theoremstyle{thmstyletwo}%
\theoremstyle{thmstylethree}%
\newcommand{\R}{\mathbb R}
\newcommand{\Sp}{\mathbb{S}_+}
\newcommand{\I}{\mathrm{Id}}
\newcommand{\1}{\mathbbm{1}}
\newcommand{\norm}[1]{\left\lVert#1\right\rVert}
\newcommand{\HH}{\mathcal{H}}
\renewcommand{\v}[1]{\mathbf{#1}}
\DeclareMathOperator{\prox}{prox}
\DeclareMathOperator{\diag}{diag}
\DeclareMathOperator{\nullspace}{null}
\DeclareMathOperator{\vvec}{vec}
\DeclareMathOperator*{\argmin}{argmin}
\DeclareMathOperator*{\minimize}{minimize}
\newenvironment{notation*}
  {\par\vspace{\abovedisplayskip}\noindent
   \tabularx{\columnwidth}{>{$}l<{$} @{${}:{}$} >{\raggedright\arraybackslash}X}}
  {\endtabularx\par\vspace{\belowdisplayskip}}
\begin{document}

\title[Decentralized Sensor Network Localization using Matrix-Parametrized Proximal Splittings]{Decentralized Sensor Network Localization using Matrix-Parametrized Proximal Splittings}

\author*{\fnm{Peter} \sur{Barkley}}\email{peter.barkley@nps.edu}
\author{\fnm{Robert L} \sur{Bassett}}\email{robert.bassett@nps.edu}

\affil{\orgdiv{Operations Research Department}, \orgname{Naval Postgraduate School}, \orgaddress{\street{1 University Circle}, \city{Monterey}, \postcode{93943}, \state{CA}, \country{USA}}}

\abstract{We present a novel application of a recently-proposed matrix-parametrized proximal splitting method to sensor network localization, the problem of estimating the locations of a set of sensors using only noisy pairwise distance information between the sensors.
The decentralized computation required by our approach respects the communication structure between sensors specified by the noisy SNL problem, thereby allowing individual sensors to estimate their location using only local computations and communication with their neighbors. 
Our proposed method experimentally outperforms a competing method for decentralized computation---the alternating direction method of multipliers (ADMM)---with respect to convergence rate and memory use. 
As an independent methodological contribution, we propose using the Sinkhorn-Knopp algorithm in a completely decentralized manner to construct the matrices which parametrize our proposed splitting method. 
We show that parameters selected using this method perform similarly to those selected via existing parameter selection methods while requiring far less computation.
Unlike centralized interior point solution methods, our first order splitting method allows for efficient warm starting, and we demonstrate improvements in convergence using rough estimates of sensor location to warm start our algorithm.
We also find that early termination of the algorithm provides more accurate location estimates than the minimizer of the node-based SDP relaxation of the SNL.
}

\maketitle

\section{Introduction and Preliminaries}\label{Sec:Preliminaries}

This paper presents a new approach for solving the decentralized sensor network localization (SNL) problem using a matrix-parametrized proximal splitting method.
Building on work in \cite{bassett2024optimaldesignresolventsplitting}, we construct a decentralized proximal splitting method which adheres to the sensor network communication structure. 
We apply this method to solve the node-based semidefinite program (SDP) relaxation of the SNL developed in \cite{wang2006further}.
We compare our contributions to a decentralized Alternating Direction Method of Multipliers (ADMM) approach.
In our numerical experiments, we show that ADMM is, across all iterations, on average twice the distance from the true location as our proposed parametrized proximal splitting method.
We also find in our experiments that the solution of our splitting method under an early termination criteria tends to be closer to the true location than the actual solution of the node-based relaxation we reach in the limit.

We introduce a novel splitting matrix design approach using the Sinkhorn-Knopp algorithm \cite{sinkhorn1967concerning} which extends the hitherto-centralized matrix design approach for these proximal splitting algorithms to allow decentralized parameter selection.
This approach performs comparably with the existing SDP-optimized matrix design in practice while taking far less time to execute for large problems.
We prove that the Sinkhorn-Knopp algorithm converges to a valid matrix parameter for the proximal splitting under general conditions.
Additionally, when the communication structure between the sensors is 2-Block, which was shown in \cite{bassett2024optimaldesignresolventsplitting} to achieve both a high degree of parallelism and a fast convergence rate, we prove that the connectedness of the communication network is the only requirement for convergence to a valid matrix parameter.

\subsection{Sensor Network Localization}
Wireless sensor networks have a wide range of military, commercial, and scientific applications which rely on accurate geolocation. 
Geolocation systems like GPS remain unavailable in many contexts, whether due to impermeable environments underwater or below ground, jamming or spoofing in military contexts, or size, weight, and power requirements or costs exceeding the capacity of micro- and nano-sensors. 
Localization based on potentially noisy distance information therefore remains an ongoing area of research, particularly for aerial and maritime unmanned systems and other dispersed terrestrial and maritime sensors \cite{patwari2005locating,biswas2004semidefinite,wang2006further,zhu2018admm}.

The sensor network localization (SNL) problem considered here, as described in \cite{biswas2004semidefinite}, 
consists of estimating the positions of a set of $n$ sensors given only the locations of a small set of anchors and a limited set of pairwise distance information between each sensor and the set of sensors and anchors within some given neighborhood of it.
This is also known as a graph realization problem, in which a set of nodes, partitioned into sensors and anchors, is localized in a Euclidean space of dimension $d$ in such a way that the distances between nodes matches the given distance information.
When the distance information contains noise, we distinguish the problem by referring to it as the noisy SNL. 
Localization in this context then consists of determining a set of sensor locations in $\R^d$ which minimize the deviation from the given distance information under a given norm.

Because of its importance in many application areas, a variety of methods have been created to solve to SNL problem. 
Multidimensional scaling (MDS) provides one approach to solving the noisy SNL.
MDS requires the estimation of the entire Euclidean Distance Matrix (EDM), in which each entry gives $d^2_{ij}$, the distance between sensor $i$ and sensor $j$, rather than just information between sensors in communication range \cite{costa2005achieving,shang2003localization,zhu2018admm}. 
SDP approaches developed in \cite{doherty2001convex,biswas2004semidefinite,biswas2006semidefinite,wang2006further} provide a convex relaxation of the problem which supports a wide range of implementations.
These SDP approaches rely only on the available distances and facilitate a natural extension to the case of noisy distance information.
Both MDS- and SDP-based relaxations allow decentralized approaches. 
These including clustering-based approaches \cite{biswas2006distributed}, subproblems which fix the neighbor locations based on their most recent solutions  \cite{wan2019distributed,shi2010distributed}, and other MDS-based approaches \cite{costa2005achieving,zhu2015distributed}. 
These are, however, sensitive to anchor density and geometry (for MDS and clustering), or rely on differentiable objectives (in the sequential minimization neighbor-fixing approach in \cite{shi2010distributed}).
The node- and edge-based relaxations of the SDP, found in \cite{wang2006further}, provide particularly scalable approaches by reducing the size of the matrices subject to positive semi-definite constraints.

\subsection{Notation and Problem Definition}
We use $\Sp^n$ to denote the set of $n \times n$ symmetric PSD matrices, and $Z \succeq W$ is equivalent to $Z-W \in \Sp^n$.
Unless otherwise noted, $||\cdot||$ refers to the Euclidean norm.
$\I$ is the identity operator and $\1$ is the ones vector in the appropriate dimension.
$\partial$ denotes the subdifferential operator and $\otimes$ the Kronecker product.
We use the notation $G(A)$ to refer to the unweighted graph over the adjacency matrix $A \in \{0,1\}^{n \times n}$, and $G_W(B)$ refers to the weighted graph over the weighted adjacency matrix $B \in \R^{n \times n}$.

The noisy SNL problem data and parameters are defined as follows:
\begin{notation*}
    d \in \mathbb{N} & dimension of the Euclidean space containing the sensors\\
    n \in \mathbb{N} & number of sensors \\
    m \in \mathbb{N} & number of anchors \\
    \mathcal{M}_i & subset of anchors $k$ for which sensor $i$ received a distance observation\\
    \mathcal{N}_i & subset of sensors $j$ for which sensor $i$ received a distance observation\\
    a_k \in \R^d & location of anchor $k$ \\
    d_{ik} \in \R_+ & sensor $i$'s noisy measurement of distance to anchor $k \in \mathcal{M}_i$ \\
    d_{ij} \in \R_+ & sensor $i$'s noisy measurement of distance to sensor $j \in \mathcal{N}_i$ \\
    X \in \R^{n \times d} & decision variable in which row $i$ gives the location estimate of sensor $i$
\end{notation*}
\noindent We refer to $\mathcal{N}_i$ as the neighborhood of sensor $i$, and $j \in \mathcal{N}_i$ as the neighbors of sensor $i$. 
The noisy SNL problem is then defined as:
\begin{align}
    \label{nonconvex_snl}
    \minimize_{X \in \R^{n \times d}} \sum_{i=1}^n \left(\sum_{j \in \mathcal{N}_i} \left| d_{ij}^2 - \norm{X_{i\cdot} - X_{j\cdot}}^2\right|  +  \sum_{k \in \mathcal{M}_i} \left| d_{ik}^2 - \norm{X_{i\cdot} - a_k}^2 \right| \right)
\end{align}
This non-convex problem can then be relaxed into a convex SDP as in \cite{biswas2004semidefinite}. 
For $Y \in \Sp^n$ we define $S: \R^{n \times d} \times \Sp^n \to \Sp^{d+n}$ and $g_i: \R^{n \times d} \times \Sp^n \to \R$ for $i=1, \dots, n$ as
\begin{align}
    S(X,Y) &= 
        \begin{bmatrix}
        \I & X^T \\
        X & Y
        \end{bmatrix} \\
    g_i(X, Y) &= \sum_{j \in \mathcal{N}_i} \left| d_{ij}^2 - Y_{ii} - Y_{jj} + 2Y_{ij}\right| + \sum_{k \in \mathcal{M}_i} \left| d_{ik}^2 - Y_{ii} - \norm{a_k}^2 + 2 a_k^T X_{i \cdot} \right| \label{g_def}
\end{align}
The SDP relaxation of \eqref{nonconvex_snl} is then given by:
\begin{subequations}
\label{convex_snl_sdp}
\begin{align}
    \minimize_{X, Y}  \quad &\sum_{i=1}^n g_i(X,Y)\\
    \text{s.t.} \quad & S(X,Y) \succeq 0
\end{align}
\end{subequations}
Following \cite{wang2006further}, we then further relax \eqref{convex_snl_sdp} into an SDP which enforces the PSD constraint only on specific principal submatrices of $S$.
We let $S^i \in \Sp^{d+1+|\mathcal{N}_i|}$ denote the principal submatrix of $S$ over columns and rows in $\{1, \dots, d$, $i+d\} \cup \{j+d : j \in \mathcal{N}_i\}$, so that the principal submatrix $S^i$ contains only the information on sensor $i$ and its neighbors in $\mathcal{N}_i$. %
The node-based SDP relaxation is then given as:
\begin{subequations}
\label{convex_snl_nsdp}
\begin{align}
    \minimize_{X, Y} \quad &\sum_{i=1}^n g_i(X,Y)\\
    \text{s.t.} \quad & S^i(X,Y) \succeq 0 \quad  i = 1, \dots, n
\end{align}
\end{subequations}
Relaxation \eqref{convex_snl_nsdp} replaces the single large semidefinite cone constraint over the entire matrix $S$ in \eqref{convex_snl_sdp} with $n$ smaller conic constraints.
In so doing, it permits a splitting over the $n$ nodes using only the information relevant within a given neighborhood, whereas the original relaxation in \eqref{convex_snl_sdp} requires communicating the entire matrix to a single node to ensure the full semidefinite constraint is satisfied.
The analysis in \cite{wang2006further} finds that relaxation \eqref{convex_snl_nsdp} is stronger than both an edge-based SDP relaxation and the second-order conic programming relaxation of \eqref{nonconvex_snl}, with solution set equality in the zero noise case whenever the graph over the distance observations is chordal.

\subsection{Matrix-Parametrized Proximal Splitting}
We propose to solve the node-based SDP relaxation \eqref{convex_snl_nsdp} using the class of matrix-parametrized proximal splitting methods developed in \cite{tam2023frugal, bassett2024optimaldesignresolventsplitting}, which are closely related to those in \cite{bredies2024graph}.
These proximal splitting methods leverage results in \cite{ryu2020uniqueness,aragon2023primal, malitsky2023resolvent} to minimize a sum of lower semi-continuous, convex, and proper functions by evaluating each proximal function once in each iteration.
We set the structure of these evaluations by choosing two PSD matrix parameters which determine the algorithm.
These algorithms are implemented in the OARS repository \cite{oars}.
We make use of the repository's matrix parameter generation tools, its serial implementation of the splitting algorithm with callbacks, and its decentralized MPI implementation of the splitting algorithm.
The matrix-parametrized proximal splitting algorithm solves problems of the following form:
\begin{equation}\label{min_sum_func}
    \min_{x \in \HH} \; \sum_{i=1}^{p} f_{i}(x)
\end{equation}
where for some integer $p \geq 2$ and $i = 1, \dots, p$, each $f_i$ is a proper lower semi-continuous convex function such that $f_{i}: \HH \to \mathbb{R} \cup \{\infty\}$. 
The Hilbert space $\HH$ contains the problem's decision variable. 
We further assume the relative interiors of the domain of each $f_{i}$ have nonempty intersection and a minimizer exists.
We employ proximal operators $\prox_{f_{1}}, \dots, \prox_{f_{p}}$, where
\begin{equation}\label{Def:Prox}
\prox_{f_i}(x) = \argmin_{w \in \HH} \; f_{i}(w) + \frac{1}{2} \|w - x\|^{2}.
\end{equation}

We define a \textit{lifted} variable $\v{x} \in \HH^p$ as $\v{x} = (x_1, x_2, \dots, x_p)$ where each $x_i \in \HH$, and, following the notation in \cite{malitsky2023resolvent,tam2023frugal,bassett2024optimaldesignresolventsplitting}, given a matrix $M \in \R^{d \times p}$, define a lifted linear operator from $\HH^p$ to $\HH^d$ as $\v{M} = M \otimes \I$ where $\I$ is the identity operator in $\HH$. 
We also define a lifted operator $\v{F}$ on $\HH^p$ as $\v{F}(\v{x}) = (\partial f_1(x_1), \partial f_2(x_2), \dots, \partial f_p(x_p))$. 

The parametrization of the splitting algorithm relies on the choice of $Z, W \in \Sp^p$ satisfying \eqref{oars}:
\begin{subequations} \label{oars}
\begin{align}
Z \succeq W \label{Z_succeq_W}\\
\nullspace{W} = \text{span}(\1) \label{null_W_1}\\
\diag(Z)=2\1 \label{diag_Z_2}\\
\1^T Z \1 = 0 \label{sum_Z_1}
\end{align}
\end{subequations}
As noted in \cite{bassett2024optimaldesignresolventsplitting}, $Z$ and $W$ can be interpreted as the weighted graph Laplacians over the graphs of the communications required by the algorithm.
We define $L$ as the strictly lower triangular solution to $2\I - L - L^T = Z$.
$\v{L}$ and $\v{W}$ are then lifted operators on $\HH^n$. 
For positive scaling parameter $\alpha$ and step size $\gamma \in (0,1)$, the algorithm iteration is given by:
\begin{subequations}\label{n_iteration_prox}
    \begin{align}
    x_1 &= \prox_{\alpha f_1}\left(v_1^{k} \right)\\
    \vdots &  \nonumber\\
    x_i &= \prox_{\alpha f_i}\left(v_i^{k} + \sum_{j=1}^{i-1} L_{ij}x_j\right)\\
    \vdots & \nonumber\\
    x_p &= \prox_{\alpha f_p}\left(v_p^{k} + \sum_{j=1}^{p-1} L_{pj}x_j\right)\\
    \v{v}^{k+1} &= \v{v}^{k} - \gamma \v{W} \v{x}.\label{n_itr2_}
    \end{align}
\end{subequations}
With $J_{\alpha \v{F}}(\v{y}) = \left(\prox_{\alpha f_1}(y_1), \dots, \prox_{\alpha f_p}(y_n)\right)$, we write \eqref{n_iteration_prox} succinctly in Algorithm \ref{n_iteration}.
At convergence with values $\v{x}^*$ and $\v{v}^*$, we have $\v{x}^* = \1 \otimes x^* = (x^*, \dots, x^*)$ where $x^*$ is a solution to \eqref{min_sum_func}, and for $\v{y}^* = \v{v}^* + (\v{L} - \I)\v{x}^*$, we have $y_i^* \in \partial f_i(x^*)$ and $\sum_{i=1}^n y_i^* = 0$.

\begin{algorithm} 
    \caption{Matrix-Parametrized Proximal Splitting Algorithm}\label{n_iteration} 
    \begin{algorithmic}[1] 
    \Require $\alpha > 0$; $\gamma \in (0,1)$; $\v{v}^0 \in \HH^p$ such that $\sum_{i=1}^p v_i^0 = 0$; $Z, W \in \Sp^p$ satisfying \eqref{oars}; lower triangular $L$ such that $Z=2\I - L - L^T$.
    \State $k \gets 0$
    \Repeat
      \State Solve $\v{x}^k = J_{\alpha \v{F}}\left(\v{v}^{k} + \v{L} \v{x}^k\right)$ for $\v{x}^k$\label{n_itr1}
      \State $\v{v}^{k+1} = \v{v}^{k} - \gamma \v{W} \v{x}^k $\label{n_itr2}
      \State $k \gets k+1$
    \Until{convergence}
\end{algorithmic}
\end{algorithm}

In a distributed computational setting where each proximal function is computed on a separate computation node, no direct communication is required between node $i$ and node $j$ within an iteration when $Z_{ij} = Z_{ji} = 0$.
Similarly, setting $W_{ij} = W_{ji} = 0$ implies that $v_i$ can be computed without $x_j$, and $v_j$ can be computed without $x_i$, in \eqref{n_itr2_}.
Therefore, if $Z_{ij} = W_{ij} = 0$, nodes $i$ and $j$ do not need to communicate directly at any point in Algorithm \ref{n_iteration}.
Given some communication structure described by an adjacency matrix $A$ and associated graph $G(A)$, in which nodes can only communicate if edges exist between them, we can therefore use the sparsity pattern of $W$ and $Z$ to design an algorithm which only communicates via valid edges.
If $Z_{ij} = W_{ij} = 0$ wherever $A_{ij} = 0$, we say that $W$ and $Z$ \textit{adhere to the sparsity structure} in $G(A)$.

One of the most efficient families of matrix parameters for Algorithm \ref{n_iteration} is the 2-Block design \cite{bassett2024optimaldesignresolventsplitting}, which can be used when $p$ is even. 
This design evenly partitions the functions into two sets, and prohibits communication within each set so that the proximal functions can run in parallel.
This is done by choosing $Z$ such that
\begin{equation}\label{2-block}
    Z = 2\begin{bmatrix}
        \I & -B^T \\
        -B & \I
    \end{bmatrix}
\end{equation}
where $B \in \R^{\frac{p}{2} \times \frac{p}{2}}$, adding no additional restrictions on $W$ beyond those in \eqref{oars}.
The parallelism of this design allows for the decentralized computation of the $p$ proximal functions over $\frac{p}{2}$ computation nodes.
Given some adjacency matrix $A \in \{0,1\}^{\frac{p}{2} \times \frac{p}{2}}$ and associated graph $G(A)$ which describe a sparsity pattern for the blocks in the 2-Block matrix parameters, we say $Z$ and $W$ \textit{2-Block adhere} to the sparsity structure in $G(A)$ if for each $i \neq j$ where $A_{ij} = 0$ we have $B_{ij} = B_{ji} = 0$ and $W_{ij} = W_{j+n, i} = W_{i+n, j} = W_{i+n, j+n} = 0$.

Efficiently finding matrices $Z$ and $W$ which adhere to a given sparsity structure remains a challenging aspect of Algorithm \ref{n_iteration}. 
In the case of $k$-regular graphs, Tam shows that setting $Z=W$ and setting $Z_{ij} = \frac{2}{k}$ satisfies \eqref{oars}, but this construction does not extend to arbitrary graphs \cite{tam2023frugal}. 
The SDP design methods in \cite{bassett2024optimaldesignresolventsplitting} also accommodate a much wider range of non-regular graphs, with a few restrictions on their level of connectivity, and allow the selection of any convex objective over the matrices. 
These SDP methods require a centralized matrix design process, however, and for problems of even moderate size they can become a bottleneck which takes a large fraction of the total execution time.
Their efficiency can be significantly improved by setting $Z=W$, tightening the formulation, or otherwise limiting the problem scope, but the computational challenge remains for large problems.

In the case of $Z=W$, given the graph Laplacian interpretation of the matrices, the problem of finding a feasible set of matrix parameters adhering to a graph reduces to that of finding a connected weighted subgraph of the communication graph such that the weighted degree of each node equals 2.
Their equality directly satisfies equation \eqref{Z_succeq_W}.
Equations \eqref{null_W_1} and \eqref{sum_Z_1} are satisfied if $W$ (and $Z$) are the weighted graph Laplacians of a connected weighted graph, because every graph Laplacian satisfies $W\1=0$, and the dimension of the null space of a graph Laplacian is equal to the number of connected components of the graph, even in the weighted case \cite{fiedler1973algebraic}.
Being a subgraph then satisfies the adherence requirement, and having a weighted degree of two for each node satisfies requirement \eqref{diag_Z_2}.

\subsection{Efficient Matrix Selection}

As an alternative to the SDP methods introduced in \cite{bassett2024optimaldesignresolventsplitting} for finding $Z$ and $W$, we next show that these matrices can instead be formed efficiently in a decentralized manner using the Sinkhorn-Knopp algorithm \cite{sinkhorn1967concerning} (also known as matrix scaling, iterative proportional fitting procedure (IPFP), or the RAS method). 
The Sinkhorn-Knopp algorithm is an iterative algorithm for constructing matrices adhering to the sparsity pattern of an input matrix but with specified row and column sums.
In the doubly stochastic case where each row and column sum to one, given a non-negative square input matrix, the algorithm alternates between scaling rows and columns to have unit sum.
We use the notation $SK(A)$ to refer to the limit of the Sinkhorn-Knopp algorithm in this doubly stochastic case, when it exists, for a given matrix $A$.

The convergence of the Sinkhorn-Knopp algorithm rests on the concept of \textit{support}.
Given some non-negative square matrix $A \in \R_+^{n \times n}$, there is a corresponding
bipartite graph over $2n$ nodes with $A$ interpreted as a weighted bi-adjacency matrix, where $A_{ij}>0$ implies an edge between node $i$ in the first set of nodes and node $j$ in the second set of nodes.
We say $A$ has \textit{support} if there exists a perfect matching between the two sets in the partition over some set of edges in $A$.
We define $A_M \in \R_+^{n \times n}$ as the matrix with only the edges in $A$ which are part of a perfect matching.
Sinkhorn and Knopp, in \cite{sinkhorn1967concerning}, show that their algorithm converges if and only if $A$ has support, and that the sparsity pattern of $SK(A)$ is that of $A_M$.
Furthermore, they show that if $A$ is symmetric, $SK(A)$ will be symmetric as well.

The Sinkhorn-Knopp algorithm can also be applied in a decentralized manner.
Given an adjacency matrix $A$ and associated graph $G(A)$, we assume that each node of $G(A)$ is a computation node which can communicate with its neighbors in the graph.
We then find $SK(A)$ by iteratively scaling the edge weights to unit sum.
To do so, each node (in parallel) begins with an edge weight of 1 for each of its neighbors (corresponding with $A_{ij}=1$ in its adjacency matrix), with all other weights set to zero. 
It then scales these weights so that their sum equals one (this corresponds to the row scaling).
Next, the node transmits each updated edge weight to the corresponding neighboring node. 
Once it has received the edge weights from all its neighbors, it again scales the received values so that they sum to one (this corresponds to the column scaling).
Each node then transmits the newly adjusted edge weights back to its neighbors, repeating the process until convergence.

Applying Sinkhorn-Knopp to the problem of finding $W$ and $Z$ adhering to a given communication graph $G(A)$ leads to the following results:

\begin{theorem}\label{sk_frs}
    Let $A \in \{0, 1\}^{n \times n}$ be a symmetric matrix with support such that $\diag(A)=0$ and $G(A_M)$ is connected.
    Then matrices $W, Z \in \Sp^n$ defined as $W=Z=2\left(\I-SK\left(A\right)\right)$ satisfy \eqref{oars} and adhere to the sparsity structure of $G(A)$.
\end{theorem}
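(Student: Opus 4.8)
The plan is to read off the four conditions in \eqref{oars}, together with the adherence requirement, as direct consequences of the structural properties of the Sinkhorn--Knopp limit; the substantive work is in identifying the right object rather than in any computation. Write $S := SK(A)$. Since $A$ has support, $S$ exists; since $A$ is symmetric, $S$ is symmetric; and the sparsity pattern of $S$ coincides with that of $A_M$ --- all three are exactly the results of Sinkhorn and Knopp quoted above. Because $\diag(A)=0$ forces $\diag(A_M)=0$, we get $\diag(S)=0$, and because $S$ is doubly stochastic we have $S\1 = \1$ and $\1^T S = \1^T$. Set $W = Z = 2(\I - S)$.

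The key observation is that $\I - S$ is precisely the graph Laplacian of the weighted graph $G_W(S)$: its degree matrix is $\I$, since each row of $S$ sums to $1$, and its weighted adjacency matrix is $S$, which has zero diagonal and hence induces no self-loops. The edge set of $G_W(S)$ is the sparsity pattern of $A_M$, i.e.\ it is $G(A_M)$, which is connected by hypothesis. I would then invoke the standard fact --- already cited in the excerpt via \cite{fiedler1973algebraic} --- that a weighted graph Laplacian is positive semidefinite and that the dimension of its null space equals the number of connected components. This yields at once $W = Z \in \Sp^n$, condition \eqref{Z_succeq_W} (trivially, since $Z-W=0$), and $\nullspace(W) = \text{span}(\1)$, which is \eqref{null_W_1}.

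Conditions \eqref{diag_Z_2} and \eqref{sum_Z_1} are then one-line checks: $\diag(Z) = 2(\diag(\I) - \diag(S)) = 2\1$ using $\diag(S)=0$, and $\1^T Z \1 = 2(\1^T \1 - \1^T S \1) = 0$ since $\1^T S \1 = n$ (equivalently, $Z\1 = W\1 = 0$). For adherence, every edge of $A_M$ is an edge of $A$, so the sparsity pattern of $S$ is contained in that of $A$; hence for $i \neq j$ with $A_{ij}=0$ we have $S_{ij}=0$ and therefore $Z_{ij} = W_{ij} = -2S_{ij} = 0$, which is the adherence condition (with the diagonal excluded in the usual way, since a node trivially ``communicates with itself'').

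The arithmetic here is elementary; the only real care needed is in citing the correct structural facts about $SK(A)$ --- existence under the support hypothesis, preservation of symmetry, and coincidence of the output sparsity pattern with $A_M$ (so that $\diag(S)=0$ and the edge set is $G(A_M)$) --- and then recognizing $\I - SK(A)$ as the Laplacian of the connected graph $G(A_M)$, which pins the null-space dimension to exactly one. I do not anticipate a genuine obstacle beyond assembling these ingredients in this order.
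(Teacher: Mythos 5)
Your proposal is correct and follows essentially the same route as the paper: invoke the Sinkhorn--Knopp structural facts (existence under support, symmetry, zero diagonal, sparsity pattern of $A_M$), interpret $2(\I - SK(A))$ as a weighted graph Laplacian of a connected graph to get positive semidefiniteness and the one-dimensional null space, and check the remaining conditions by direct computation. The only cosmetic difference is that the paper justifies positive semidefiniteness via the Gershgorin Circle Theorem rather than via the Laplacian interpretation, but both are standard and interchangeable here.
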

\begin{proof}
    By the results in \cite{sinkhorn1967concerning}, the Sinkhorn-Knopp algorithm on $A$ converges, and $SK(A)$ is a symmetric doubly-stochastic adjacency matrix with zeros on the diagonal and a sparsity pattern which matches $A_M$.
    $SK(A)$ is therefore the weighted adjacency matrix of a connected weighted graph.
    Therefore $Z$ and $W$ are well-defined, and $Z \succeq W$ \eqref{Z_succeq_W} is satisfied by their equality.
    $Z$ and $W$ are positive semi-definite by the Gershgorin Circle Theorem.
    Since $SK(A)\1 = \1$, we have $W\1=Z\1=2(\I-SK(A))\1 = 0$, so $\1^T Z \1 = 0$ and \eqref{sum_Z_1} is satisfied.
    The connectivity of the weighted graph with adjacency matrix $2SK(A)$ means that its graph Laplacian has null space of dimension 1, so $\nullspace{W} = \text{span}(\1)$ \eqref{null_W_1}.
    The requirement $\diag(Z)=2\1$ \eqref{diag_Z_2} is satisfied because $SK(A)$ has zeros on its diagonal, and $Z$ and $W$ therefore satisfy the requirements of \eqref{oars}.
    Furthermore, since $SK(A)$ has the same sparsity pattern as $A_M$, and $G(A_M)$ is a subgraph of $G(A)$, $SK(A)$ adheres to the sparsity pattern of $G(A)$.
\end{proof}
It may not be directly evident a priori whether the adjacency matrix $A$ of a given connected graph $G(A)$ has support and $A_M$ is connected.
In the case of the $2$-Block designs \eqref{2-block} above, however, we can directly construct a larger matrix $A' \in \{0,1\}^{2n \times 2n}$ which 2-Block adheres to the sparsity pattern of $G(A)$, has support, is connected, and has $A'_M = A'$, so that $A'_M$ is also connected.
We establish this result with the following corollary, which will be useful in the remainder:
\begin{corollary}\label{sk_ones}
    Let $A \in \{0, 1\}^{n \times n}$ be the adjacency matrix of a connected graph.
    Then matrix $A + \I$ has support, and matrices $W, Z \in \R^{2n \times 2n}$ defined as 
    \[W=Z=2\begin{bmatrix} \I & -SK(A + \I) \\ -SK(A + \I) & \I\end{bmatrix}\] satisfy \eqref{oars} and 2-Block adhere to the sparsity structure of $G(A)$.
\end{corollary}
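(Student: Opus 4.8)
The plan is to obtain the corollary as a consequence of Theorem~\ref{sk_frs}, applied to the $2n\times 2n$ symmetric matrix $A'$ whose two diagonal blocks are the $n\times n$ zero matrix and whose two off-diagonal blocks are $A+\I$. This $A'$ has zero diagonal, so Theorem~\ref{sk_frs} will apply once we verify that $A'$ has support and that $G(A'_M)$ is connected, and it will then give that $W'=Z'=2(\I_{2n}-SK(A'))$ satisfy \eqref{oars} and adhere to $G(A')$. It remains to recognize $SK(A')$ as the block matrix displayed in the statement and to sharpen ``adheres to $G(A')$'' into ``2-Block adheres to $G(A)$''.

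First I would settle the matching structure. Since $\diag(A+\I)=\1$, the identity permutation is a perfect matching for $A+\I$, so $A+\I$ has support; and for $i\ne j$ with $A_{ij}=1$ (hence $A_{ji}=1$, as $A$ is symmetric) the transposition $i\leftrightarrow j$ together with $k\to k$ elsewhere is again a perfect matching, so every positive entry of $A+\I$ lies in one, i.e. $(A+\I)_M=A+\I$. In particular $SK(A+\I)$ is well-defined, symmetric (Sinkhorn limits of symmetric matrices are symmetric), and has sparsity pattern exactly $A+\I$. Because a perfect matching of $A'$ is precisely a pair of perfect matchings of $A+\I$, one for each off-diagonal block, the same facts give that $A'$ has support and $A'_M=A'$. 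Next I would check that $G(A')=G(A'_M)$ is connected: in $G(A')$ vertex $i$ is adjacent to vertex $n+j$ exactly when $i=j$ or $i\sim j$ in $G(A)$, and there are no edges inside $\{1,\dots,n\}$ or inside $\{n+1,\dots,2n\}$; given a path $i=v_0\sim v_1\sim\cdots\sim v_k=j$ in $G(A)$ one lifts it to the walk $v_0\to n+v_1\to v_1\to n+v_2\to\cdots\to v_k$ in $G(A')$, and each $n+i$ is adjacent to $i$, so connectedness of $G(A)$ forces connectedness of $G(A')$. Theorem~\ref{sk_frs} now yields the validity of $W'=Z'=2(\I_{2n}-SK(A'))$. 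To identify $SK(A')$, I would use that $A+\I$ is symmetric with $G((A+\I)_M)$ connected, so its Sinkhorn limit is realized by a single symmetric diagonal scaling, $SK(A+\I)=D(A+\I)D$; conjugating $A'$ by $\diag(D,D)$ then gives a doubly stochastic matrix with off-diagonal blocks $SK(A+\I)$ and zero diagonal blocks, which by uniqueness of the doubly stochastic Sinkhorn limit equals $SK(A')$. Substituting this into $2(\I_{2n}-SK(A'))$ reproduces exactly the $W=Z$ in the statement, which are symmetric and, by the Gershgorin argument used in Theorem~\ref{sk_frs}, positive semidefinite, hence lie in $\Sp^{2n}$. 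Finally, 2-Block adherence to $G(A)$ is read off directly: for $i\ne j$ with $A_{ij}=0$ we have $(A+\I)_{ij}=0$, so with $B:=SK(A+\I)$ we get $B_{ij}=B_{ji}=0$, $W_{ij}=W_{i+n,j+n}=2\I_{ij}=0$, and $W_{i+n,j}=W_{j+n,i}=-2B_{ij}=0$, which is exactly the required pattern.

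The only step I expect to need genuine care is the identification of $SK(A')$ with the block matrix --- i.e. the fact that the Sinkhorn limit of a symmetric matrix whose matching support is connected can be realized by a single symmetric scaling $D(A+\I)D$ rather than only by a two-sided scaling $D_1(A+\I)D_2$; everything else is matching bookkeeping and the short connectivity argument. A route that avoids this altogether is to verify \eqref{oars} directly for the displayed $W=Z$: it equals $2(\I_{2n}-N)$ where $N$ is the symmetric, entrywise nonnegative, zero-diagonal $2n\times 2n$ matrix with off-diagonal blocks $SK(A+\I)$ and all row sums equal to $1$, hence twice the Laplacian of a nonnegatively weighted graph. Then $W\1=0$, $\diag(Z)=2\1$, $\1^T Z\1=0$, and positive semidefiniteness are immediate, $\nullspace{W}=\text{span}(\1)$ follows from the same connectivity argument applied to $G_W(N)$, and 2-Block adherence is obtained as above.
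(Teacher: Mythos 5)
Your proposal is correct and follows the paper's overall skeleton: establish that $A+\I$ has support and equals $(A+\I)_M$ via the diagonal matching and the transposition swap, form $A'$ with zero diagonal blocks and off-diagonal blocks $A+\I$, verify the hypotheses of Theorem~\ref{sk_frs} for $A'$ (including the same path-lifting argument for connectivity of $G(A')$), and then read off 2-Block adherence. The one step where you genuinely diverge is the identification $SK(A')=\bigl[\begin{smallmatrix}0 & SK(A+\I)\\ SK(A+\I) & 0\end{smallmatrix}\bigr]$: the paper obtains this elementarily by observing that every Sinkhorn--Knopp iterate on $A'$ retains the block-antidiagonal form with blocks equal to the corresponding iterate on $A+\I$, whereas you invoke the existence of a single symmetric scaling $SK(A+\I)=D(A+\I)D$ (which requires full indecomposability of $A+\I$ --- true here since $A+\I$ has positive diagonal and is irreducible by connectivity of $G(A)$, though your phrasing in terms of $G((A+\I)_M)$ being connected is a little loose) together with uniqueness of the doubly stochastic limit. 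Your route leans on a deeper piece of Sinkhorn theory but is shorter; the paper's iterate-tracking argument is more self-contained. You also get $A'_M=A'$ more cleanly than the paper by noting that a perfect matching of $A'$ is exactly a pair of perfect matchings of $A+\I$, and your closing alternative --- verifying \eqref{oars} directly for the displayed $W=Z$ as twice a weighted graph Laplacian --- is a valid way to sidestep the $SK(A')$ identification entirely.
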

\begin{proof}
    We first show that $A + \I$ has support, and that for a matrix $A' = \begin{bmatrix} 0 & A + \I \\ A + \I & 0\end{bmatrix}$, we have $SK(A') = \begin{bmatrix} 0 & SK(A + \I) \\ SK(A + \I) & 0\end{bmatrix}$.
    We then show that $A'$ satisfies the conditions of Theorem \ref{sk_frs}.
    We conclude by showing that $Z$ and $W$ formed from $SK(A')$ as given in Theorem \ref{sk_frs} 2-Block adhere to $G(A)$.

    Interpreting $A + \I$ as a bi-adjacency matrix, we can form a perfect matching over the nonzero diagonal entries, pairing node $i$ in the first set with node $i$ in the second set.
    We call this the diagonal matching.
    This establishes that $A + \I$ has support.

    We next establish that $A + \I = (A + \I)_M$, that is, all entries in $A + \I$ are part of some perfect matching.
    For all nonzero entries $A_{ij}$, we have entry $A_{ji}$ by symmetry, since $A$ is the adjacency matrix of an undirected graph. 
    We therefore have a perfect matching by replacing the matchings $(i, i)$ and $(j, j)$ in the diagonal matching with $(i, j)$ and $(j, i)$.
    These are the only nonzero entries in $A + \I$ other than those in the perfect matching on the diagonal, so each entry in $A + \I$ is part of a perfect matching, and $A + \I = (A + \I)_M$.
    Therefore $SK(A + \I)$ exists and is connected.

    Let $A' = \begin{bmatrix} 0 & A + \I \\ A + \I & 0\end{bmatrix}$.
    Direct inspection of the Sinkhorn-Knopp algorithm shows that if $B_k$ is the result after iteration $k$ of the Sinkhorn-Knopp algorithm applied to $A + \I$, then the result of iteration $k$ of the Sinkhorn-Knopp algorithm on $A'$ is $B'_k = \begin{bmatrix} 0 & B_k\\ B_k & 0\end{bmatrix}$.
    Therefore $SK(A') = \begin{bmatrix} 0 & SK(A + \I) \\ SK(A + \I) & 0\end{bmatrix}$.
    Using a similar substitution process to that above, it can also be shown that $A' = A'_M$, that is, every nonzero entry in $A'$ is part of some perfect matching.

    Furthermore, $G(A') = G(A'_M)$ is connected by the connectivity of $G(A)$ and the edges $(i, i+n)$ and $(i+n, i)$ for $i=1, \dots, n$. 
    This can be seen by noting that, since $G(A)$ is connected, it contains a path $p = (i, p_1, \dots, p_k, j)$ from any node $i$ to any node $j$. 
    In $G(A')$ we can use an expanded version of this path (called $p'$) to connect any nodes $i$ and $j$ by alternating between nodes in $\{1, \dots, n\}$ and those in $\{n+1, \dots 2n\}$.
    In the case where $i,j \leq n$, this expanded path is $p' = (i, p_1+n, p_1, p_2+n, p_2, \dots, p_k+n, p_k, j+n, j)$.
    If $i > n$, we add the link from $i$ to $i-n$ at the beginning.
    The case of $j > n$ is included above by using the path in $G(A)$ from $i$ to $j-n$ and terminating one step early.
    
    Therefore $A'$ is a symmetric matrix with support and $G(A'_M)$ is connected. 
    By Theorem \ref{sk_frs}, $W$ and $Z$ such that \[W=Z=2\left(\I-SK\left(A'\right)\right) = 2\begin{bmatrix} \I & -SK(A + \I) \\ -SK(A + \I) & \I\end{bmatrix}\] satisfy the requirements of \eqref{oars}.
    By construction, $A'_{i, j+n} = A'_{j, i+n} = A'_{j+n, i} = A'_{i+n, j} = 0$ for all $i \neq j$ such that $A_{ij} = 0$, and therefore, by Theorem \ref{sk_frs}, $W_{i, j+n} = Z_{i, j+n} = W_{i+n, j} = Z_{i+n, j} = 0$ for any nodes $i \neq j$ such that $A_{ij} = 0$, and $Z$ and $W$ 2-Block adhere to the sparsity structure in $G(A)$.
\end{proof}

\section{Noisy Sensor Network Localization via Proximal Splitting}\label{Sec:SNL}

\subsection{Proximal Splitting Design}
We now describe the proximal splitting over the node-based relaxation \eqref{convex_snl_nsdp}. 
We define $\delta_i$ as the indicator function on the positive semidefinite cone for principal submatrix $S^i$, and define $f_i$ for $i = 1, \dots, 2n$ so that the first $n$ functions refer to \eqref{g_def} and the second $n$ function refer to the indicator functions.
That is,
\begin{align}
    \delta_i(X,Y) &= \begin{cases}
        0 & S^i(X,Y) \succeq 0\\
        \infty & S^i(X,Y) \not\succeq 0
    \end{cases}\\
    f_i(X,Y) &= \begin{cases}
        g_i(X, Y) & i = 1, \dots, n \\
        \delta_{i-n}(X, Y) & i = n+1, \dots, 2n
    \end{cases}\label{f_def}
\end{align}
For any scaling parameter $\alpha > 0$, problem \eqref{convex_snl_nsdp} is then equivalent to 
\begin{equation}
    \min_{X,Y} \alpha \sum_{i}^{2n} f_i(X,Y)
\end{equation}
This objective function can be directly split into $2n$ functions, with $n$ 1-norm summations $g_i$ and $n$ indicator functions $\delta_i$. 
We therefore adopt a 2-Block design for the splitting.
We define the adjacency matrix $A \in \{0,1\}^{n \times n}$ based on the sets $\mathcal{N}_i$ as
\begin{equation}\label{A_def}
    A_{ij} = \begin{cases}
        1 & j \in \mathcal{N}_i \text{ or } i \in \mathcal{N}_{j}\\
        0 & \text{otherwise}
    \end{cases}
\end{equation}
We assume in the remainder that neighborhoods are chosen so that $G(A)$ is a connected graph.
Transferring data over the graph $G(A)$ then ensures that each neighbor $j \in \mathcal{N}_i$ of a given sensor $i$ receives updates from $i$, as do any neighbors $j$ such that $i \in \mathcal{N}_j$.
Using the results of Corollary \ref{sk_ones}, we therefore choose $Z, W \in \Sp^{2n \times 2n}$ such that
\begin{equation}\label{Z_W_SK}
    Z = W = 2\begin{bmatrix} \I & -SK(A+\I)\\ -SK(A+\I) & \I\end{bmatrix}.
\end{equation}

\subsection{Proximal Function Evaluation}
We assign each sensor $i$ the prox functions over $\delta_i$ and $g_i$.
The proximal functions over the indicator function $\delta_i$ can be computed as a projection onto the PSD cone, returning $V_+\Lambda_+ V_+^T$, where $\Lambda$ and $V$ are the eigenvalues and eigenvectors of the input matrix, and $\Lambda_+$ and $V_+$ return the non-negative eigenvalues and their associated eigenvectors, respectively.
We also need to find the prox of $g_i$, which under a weighted norm for $X$ (to align with the Frobenius norm on symmetric $S(X,Y)$ in \cite{wang2006further}) is defined as 
\begin{equation}\label{g_prox}
\argmin_{X, Y} g_i(X,Y) + \frac{1}{2}\norm{Y - Y^k}_F^2 + \norm{X - X^k}_F^2.
\end{equation}
We note that for $j, j' \notin \mathcal{N}_i$, the minimizer $(X^*, Y^*)$ in \eqref{g_prox} returns $X^*_{j \cdot} = X^k_{j \cdot}$, $Y^*_{jj} = Y_{jj}^k$, $Y^*_{ij} = Y^*_{ji} = Y_{ij}^k$, and $Y^*_{jj'} = Y_{jj'}^k$.
With appropriate vectorization of $(X,Y)$, their coefficients, and the constant terms, we can reduce \eqref{g_prox} to a least absolution deviation problem with least squares regularization.
For each $i\in \{1, \dots, n\}$, let $(j_1, \dots, j_{|\mathcal{N}_i|})$ enumerate the elements in $\mathcal{N}_i$ and $(k_1, \dots, k_{|\mathcal{M}_i|})$ enumerate the elements in $\mathcal{M}_i$.
We then vectorize as 
\begin{align}
\vvec_i(X,Y) &= \left(Y_{ii}, Y_{j_1 j_1}, Y_{i j_1}, \dots, Y_{j_{|\mathcal{N}_i|} j_{|\mathcal{N}_i|}}, Y_{i j_{|\mathcal{N}_i|}}, X_{i \cdot}\right) \in \R^{1 + 2|\mathcal{N}_i| + d} \\
c_i &= \left(d_{ij_1}^2, \dots, d_{ij_{|\mathcal{N}_i|}}^2, d_{ik_1}^2 - \norm{a_{k_1}}^2, \dots, d_{i{k_{|\mathcal{M}_i|}}}^2 - \|a_{k_{|\mathcal{M}_i|}}\|^2\right) \in \R^{|\mathcal{N}_i| + |\mathcal{M}_i|} \\
D_i &= \text{diag}(1,1,\sqrt{2},1,\sqrt{2}, \dots, 1, \sqrt{2}, \sqrt{2}, \dots, \sqrt{2}) \in \R^{\left(1+2|\mathcal{N}_i|+d\right) \times \left(1+2|\mathcal{N}_i|+d\right)} \\
N_i &= \begin{bmatrix}
1 & -2 & 0 &  0 & \dots & 0 & 0 \\
0 &  0 & 1 & -2 & \dots & 0 & 0 \\
  &    &   &    & \ddots &  &   \\
0 &  0 & 0 & 0  & \dots & 1 & -2 \\
\end{bmatrix} \in \R^{|\mathcal{N}_i|  \times 2|\mathcal{N}_i|}\\
M_i &= \begin{bmatrix}
    a_{k_1} \\
    a_{k_2} \\
    \vdots \\
    a_{k_{|\mathcal{M}_i|}}
\end{bmatrix} \in \R^{|\mathcal{M}_i| \times d} \\
K_i &= \begin{bmatrix}
    \1 & N_i & 0 \\
    \1 & 0   & M_i 
\end{bmatrix} \in \R^{\left(|\mathcal{N}_i| + |\mathcal{M}_i|\right) \times \left(1 + 2|\mathcal{N}_i| + d\right)}
\end{align} 
Here $D_i$ doubles the entries in $S^i(X,Y)$ which are off-diagonal (and therefore occur twice in the matrix but only once in the vector) and $K_i\vvec_i(X,Y)$ returns each entry of $X$ and $Y$ in the absolute value summation with the appropriate coefficients.
We define the vectorized decision variable $w_i \in \R^{1 + 2|\mathcal{N}_i| + d}$ as $w_i = \vvec_i(X,Y)-\vvec_i(X^k, Y^k)$ and $c_i^k = c_i + K_i\vvec_i(X^k,Y^k)$, and then have the following problem:
$$\min_{w_i \in \R^{1 + 2|\mathcal{N}_i| + d}} \norm{c_i^k - K_i w_i}_1 + \frac{1}{2}\norm{D_i w_i}^2.$$
This is a regularized least absolute deviation problem which can be solved with a variety of methods.
In this case, we solve it with ADMM, which allows us to warm start subsequent solves using the previous solution vectors.
We do so with the following reformulation:
\begin{align}
    \min_{y_i \in \R^{|\mathcal{N}_i| + |\mathcal{M}_i|}, w_i \in \R^{1 + 2|\mathcal{N}_i| + d}} \quad & \norm{y_i}_1 + \frac{1}{2}\norm{D_i w_i}^2 \\
    \text{s.t.} \quad & y_i = c_i^k - K_i w_i
\end{align}
For some starting values $\lambda_i^0, y_i^0 \in \R^{|\mathcal{N}_i| + |\mathcal{M}_i|}$, $\nu=1$, and $\rho > 0$, the ADMM execution for each sensor $i$ is then:
\begin{align}
    w_i^{\nu} &= \left(\rho K_i^T K_i + D_i^T D_i\right)^{-1} \rho K_i^T\left(\lambda_i^{{\nu}-1} + c_i^k - y_i^{{\nu}-1}\right) \label{cho_step}\\
    y_i^{\nu} &= \text{soft}_{1/\rho}\left(c_i^k - K_i w_i^{\nu} + \lambda_i^{{\nu}-1}\right) \\
    \lambda_i^{\nu} &= \lambda_i^{{\nu}-1} - y_i^{\nu} - K_i w_i^{\nu} + c_i^k\label{lambda_update}
\end{align}
where $\text{soft}_{\tau}(\cdot)$ is the soft-thresholding operator defined as $\text{soft}_{\tau}(\cdot) = \text{sign}(\cdot)\left(|\cdot|-\tau\right)_{+}$.
We can solve \eqref{cho_step} efficiently by computing the Cholesky factorization of $\rho K^T K + D^T D$ once at the beginning of the algorithm. 
We terminate the ADMM algorithm for the prox when the Euclidean norm of the $\lambda$ update in \eqref{lambda_update} falls below a given tolerance, which we tighten as the algorithm progresses.

\subsection{Decentralized ADMM}
We compare the decentralized 2-Block matrix parametrized algorithm with a decentralized ADMM algorithm which splits over the same $2n$ functions $f_i$ in \eqref{f_def}. 
We compute the prox of $f_i$ and $f_{i+n}$ in series on each sensor $i$, with all sensors acting in parallel. 
The required communications then take place between neighbors prior to the next iteration.

Using the same adjacency matrix $A$ as in \eqref{A_def} we form the graph $G(A')$ over $2n$ nodes with the adjacency matrix $A' = \begin{bmatrix}
    A & A + \I \\
    A + \I & A \\
\end{bmatrix}.$ 
Let $\mathcal{E}'$ be the set of edges $(i,j)$ in $G(A')$.
We define $\mathcal{K}_i$ as the set of neighbors of node $i$ in the graph $G(A')$, noting that $\mathcal{K}_i$ is defined over $i=1, \dots, 2n$. 
For $i=1, \dots n$ we therefore have 
\begin{align}
\mathcal{K}_i &= \mathcal{N}_i \cup \{n+i\} \cup \left\{j+n | j \in \mathcal{N}_i\right\} \\
\mathcal{K}_{i+n} &= \mathcal{N}_i \cup \{i\} \cup \left\{j+n | j \in \mathcal{N}_i\right\}
\end{align}
Then with the additional variable $S_{ij} \in \R^{(d+n)\times(d+n)}$ for each edge in $\mathcal{E}'$, which we use to force equality between nodes $i$ and $j$, we form the equivalent problem
\begin{align}
    \label{convex_snl_nsdp_graph}
    \minimize_{\substack{\{X_i, Y_i \}_{ i = 1 \dots 2n}\\ \{S_{ij}\}_{(i,j) \in \mathcal{E'}}}}  \quad &\alpha \sum_{i=1}^{2n} f_i(X_i, Y_i) \\
    \text{s.t.} \quad & \begin{rcases*}
        S(X_i, Y_i) = S_{ij} \\
        S(X_j, Y_j) = S_{ij}
    \end{rcases*} \quad \forall (i,j) \in \mathcal{E'}
\end{align}
We then use the derivation in \cite{ryu2022large} to reduce \eqref{convex_snl_nsdp_graph} to the following decentralized ADMM algorithm:
\begin{align}
    U^{k+1}_i &= \prox_{\frac{\alpha}{|\mathcal{K}_i|}f_i}\left(V^k_i\right) \quad \forall i \in \mathcal{N} \\
    R^{k+1}_i &= \frac{1}{|\mathcal{K}_i|}\sum_{j \in \mathcal{K}_i} U_j^{k+1} \quad \forall i \in \mathcal{N} \\
    V^{k+1}_i &= V^k_i + R^{k+1}_i - \frac{1}{2}R^k_i - \frac{1}{2}U^k_i 
\end{align}
The prox functions are computed identically to those in Algorithm \ref{n_iteration}, giving both approaches the same computational complexity for their subproblems. %
The matrix parametrized algorithm does, however, have an additional round of communication in each iteration to transmit the data from sensors to their neighbors for the computation of $\v{L}\v{x}$ in \eqref{n_itr1} for the second block of $n$ prox functions, although the total number of bytes transmitted in each iteration remains the same.
We note that the ADMM memory requirement for each sensor, with the calculation of both $f_i$ and $f_{i+n}$ and excluding the requirement for the proximal subproblems, is that of six floating point $n+d$ symmetric matrices ($V$, $R$, and $U$ for $i$ and $i+n$).
The memory required for Algorithm \ref{n_iteration} is one third smaller, at four floating point $n+d$ symmetric matrices ($v$ and $x$ for $i$ and $i+n$).

\section{Numerical Experiments}\label{Sec:Experiments}

We begin by comparing the performance of Algorithm \ref{n_iteration} with decentralized ADMM using both cold start and warm start values. 
We then compare the performance of the matrix parameters $W$ and $Z$ constructed using the Sinkhorn-Knopp algorithm to those constructed via the SDP methods in \cite{bassett2024optimaldesignresolventsplitting}, evaluating both the time required to generate the matrix parameters and the rate of convergence of Algorithm \ref{n_iteration} using those parameters.
We conclude by investigating the use of the objective value of \eqref{convex_snl_nsdp} as an early stopping criterion reducing the required number of iterations of Algorithm \ref{n_iteration}, and as a method to produce improved location estimates. We propose a criterion which outperforms the solution to \eqref{convex_snl_nsdp} in over 60 \% of our experiments.
Unless otherwise noted, matrix parameters $Z$ and $W$ are 2-Block matrices selected using the Sinkhorn-Knopp algorithm as given in \eqref{Z_W_SK}.

We create problem data as in \cite{wang2006further}, generating $n=30$ sensor locations and $m=6$ anchor locations in $[0,1]^2$ randomly according to a uniform distribution. 
To build the neighbors $\mathcal{N}_{i}$ for each sensor, we select sensors with distance less than 0.7 from sensor $i$, up to a maximum of 7 (which we then select at random uniformly from the set of sensors within range).
The noisy distance data is given by $d_{ij} = d_{ij}^0(1+0.05 \varepsilon_{ij})$ and $d_{ik} = d_{ik}^0(1+0.05 \varepsilon_{ik})$ where each $\varepsilon_{ij}$ ($\varepsilon_{ik}$) is a standard Gaussian random variable generated independently for each true distance $d^0_{ij}$ ($d^0_{ik}$) and 0.05 is the noise factor. 
Algorithm \ref{n_iteration} uses a step size of $\gamma=0.999$ and a scaling parameter of $\alpha=10.0$.
ADMM uses a scaling parameter of $\alpha=150.0$.
Both scaling parameters which are independently hand-tuned to optimize performance.
The ADMM scaling closely matches the matrix-parametrized scaling multiplied by $\frac{1}{2n}\sum_{i=1}^{2n}|\mathcal{K}_i|$, which aligns with the observation that $\frac{1}{|\mathcal{K}_i|}$ is the additional scaling parameter for each prox in the decentralized graph ADMM. 
Where noted, we calculate reference solutions ($\bar{X}$) to the relaxation \eqref{convex_snl_nsdp} using MOSEK \cite{mosek}.
For reproducibility, the code supporting these experiments is available in a \href{https://github.com/peterbarkley/snl/}{publicly accessible repository on Github} \cite{snl-repo}.

\begin{figure}[h!]
    \centering
    \begin{subfigure}{.49\textwidth}
    \centering        
    \includegraphics[width=\textwidth]{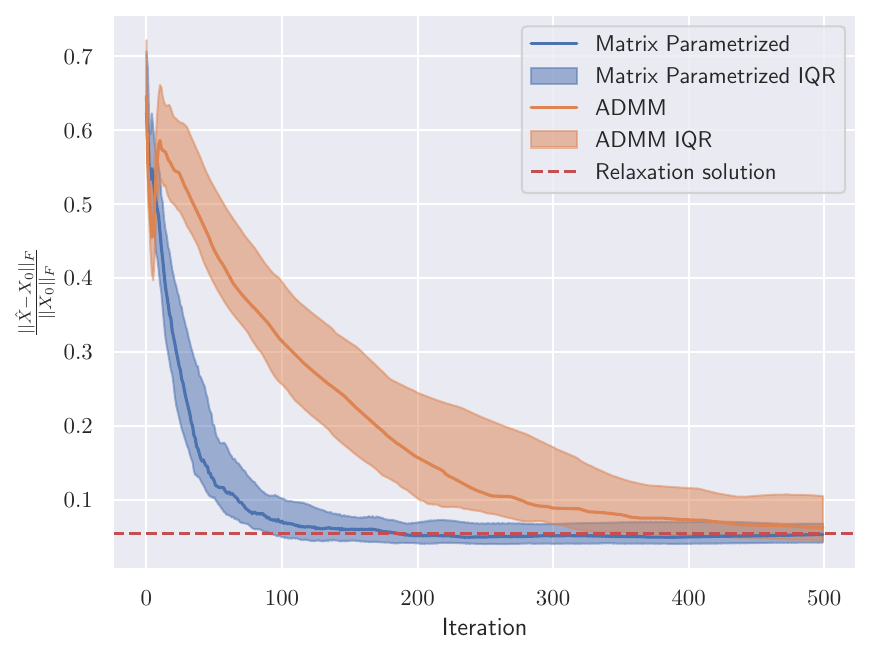}
    \caption{Cold Start}
    \label{fig:cold_admm}
\end{subfigure}%
\hspace{.019\textwidth}%
\begin{subfigure}{.49\textwidth}
    \centering        
    \includegraphics[width=\textwidth]{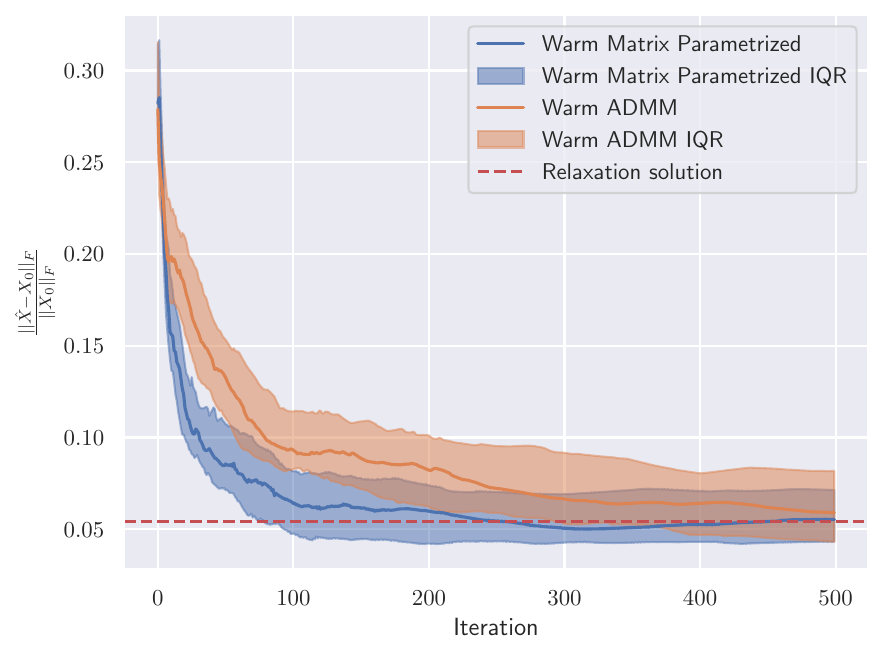}
    \caption{Warm Start}
    \label{fig:warm_admm}
\end{subfigure}
\caption{Relative error from the true location for Algorithm \ref{n_iteration} and ADMM}
\label{fig:oars_admm}
\end{figure}

Figure \ref{fig:oars_admm} depicts the convergence of Algorithm \ref{n_iteration} and ADMM over 50 sets of randomized data.
The cold start experiments, starting with an initialization at zero, are given by Figure \ref{fig:cold_admm}, and those starting at the warm start value (described below) are given by Figure \ref{fig:warm_admm}.
In these experiments, we denote by $\hat{X}^k_i$ the iteration $k$ value of the estimate of the sensor $i$ location held at sensor $i$, and $X_0$ the array of true location values. 
The relative error is then given as $\frac{\norm{\hat{X}-X_0}_F}{\norm{X_0}_F}$. 
For each method, we display the median values of the relative error over the tests with their interquartile range (IQR).
Algorithm \ref{n_iteration} dominates the decentralized ADMM splitting, achieving distance errors at each iteration which are less than half of that of ADMM in the iterations prior to matching the relaxation solution distance error.
We also see that, on average, Algorithm \ref{n_iteration} reaches parity with the relaxation solution in less than 200 iterations.
This is the case even though the violation of the PSD constraints in \eqref{convex_snl_nsdp} has not yet converged fully to zero.
We observe similar results when varying the noise factor, the sensor neighborhood radius, the number of anchors, and the problem size.

Figure \ref{fig:warm_admm} shows that warm starting the algorithms with an estimate for $X$ can speed up the convergence of the algorithms. 
This warm start data could come, for example, from cooperative mobile sensors with an estimated velocity vector for each sensor, a previous location estimate, and a timestamp.
We simulate this by adding independent zero-mean Gaussian noise with standard deviation 0.2 to each component of the true displacement vector for each sensor and using this perturbed $\tilde{X}$ to warm start both Algorithm \ref{n_iteration} and ADMM. 
For the Algorithm \ref{n_iteration} warm start, we use the fact that at convergence we have $\v{v}^* = \v{u}^* + (\I - \v{L})\1\otimes x^*$ \cite{bassett2024optimaldesignresolventsplitting}.
With no knowledge of optimal subgradients $\v{u}^*$, we set them equal to 0.
The 2-Block nature of $Z$ means each row of the upper block of $L$ is 0, and each row of the lower block of $L$ sums to two.
This implies a warm start value of $v_i^0 = (\tilde{X}, \tilde{X}\tilde{X}^T)$ for $i = 1, \dots, n$, and $v_i^0 = -(\tilde{X}, \tilde{X}\tilde{X}^T)$ for $i = n+1, \dots, 2n$. 
These values satisfy the zero-sum condition on $\v{v}^0$ in Algorithm \ref{n_iteration}. 
We warm start ADMM similarly, by letting $V_i^0 = R_i^0 = U_i^0 = (\tilde{X}, \tilde{X} \tilde{X}^T)$ for all $i$.
When warm starting both Algorithm \ref{n_iteration} and ADMM, Figure \ref{fig:oars_admm} demonstrates that both algorithms take fewer iterations to converge. As in the cold start setting, Algorithm 1 consistently outperforms ADMM.

\subsection{Splitting Matrix Design Comparison}
We next analyze the performance of matrix parameters computed using the Sinkhorn-Knopp algorithm relative to the parameters computed using the SDP objectives proposed in \cite{bassett2024optimaldesignresolventsplitting}. 
We consider matrices restricted to the 2-Block design over the available communication graph and then optimize to either maximize the Fiedler value (also known as the algebraic connectivity), minimize the total effective resistance, or minimize the second-largest eigenvalue magnitude (SLEM) of the resulting weighted graph, as described in \cite{bassett2024optimaldesignresolventsplitting}.

\begin{figure}[!ht]
    \centering
    \begin{subfigure}{.49\textwidth}
    \centering        
    \includegraphics[height=5cm]{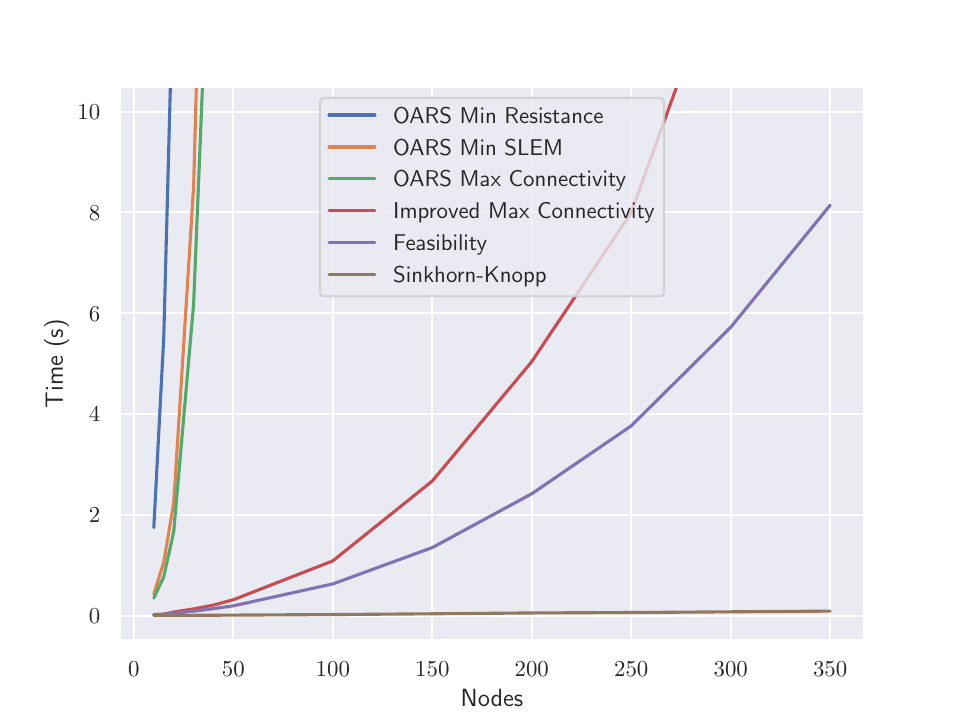}
    \caption{Mean matrix design computation time}
    \label{fig:methods}
\end{subfigure}%
\hspace{.019\textwidth}%
\begin{subfigure}{.49\textwidth}
    \centering        
    \includegraphics[height=4.7cm]{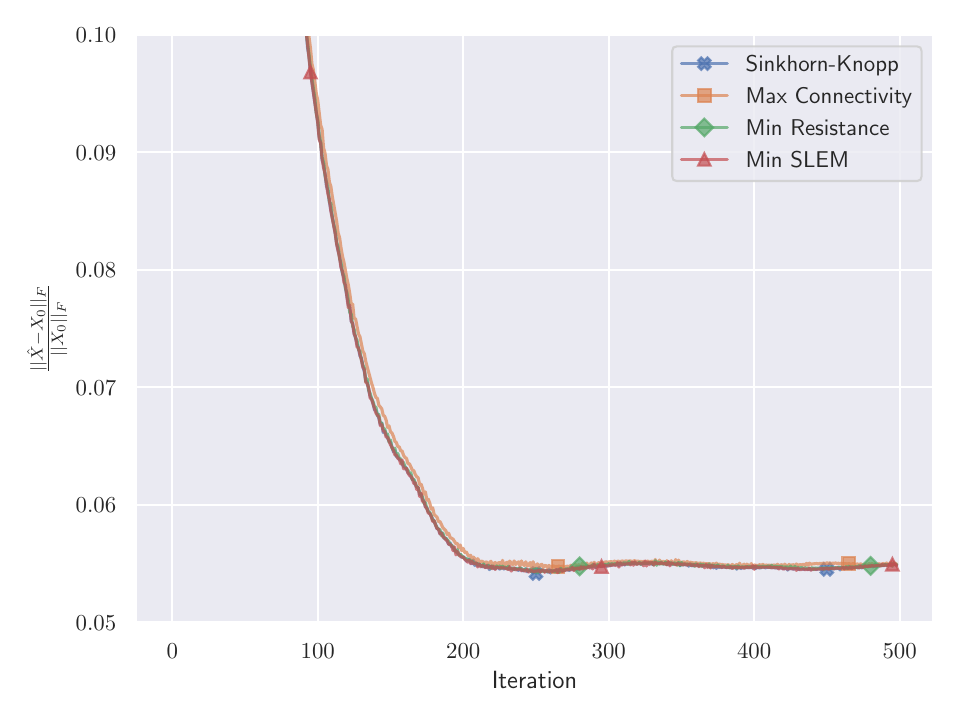}
    \caption{Relative error}
    \label{fig:matrices}
\end{subfigure}
\caption{Matrix design comparison}
\end{figure}
Figure \ref{fig:methods} provides the matrix design computation time for the Sinkhorn-Knopp algorithm, the OARS implementation of \cite{bassett2024optimaldesignresolventsplitting} provided in \cite{oars}, and our improved maximum Fiedler value SDP \eqref{maxfiedler} and feasibility SDP \eqref{feasibility}, which are described in the appendix in section \ref{improved_sdp}.
The Sinkhorn-Knopp algorithm scales to large problems without issue, whereas the time to compute the matrix parameters increases rapidly for the SDP methods, even in the improved maximum Fiedler value and feasibility formulations.
We also evaluate the convergence of the splitting algorithm under the various matrix designs, finding no significant difference between the performance of the different matrices as shown in Figure \ref{fig:matrices}.
The Sinkhorn-Knopp algorithm parameters therefore provide a convergence rate on \eqref{convex_snl_nsdp} which is indistinguishable from those of the other design approaches while scaling efficiently to large problem sizes.

\subsection{Early Termination}
We now shift to analyzing the performance of Algorithm \ref{n_iteration} when terminated prior to convergence to the solution of the relaxation \eqref{convex_snl_nsdp}.
In \cite{biswas2006semidefinite}, Biswas et al. note the tendency for the solutions of the SDP relaxation to exhibit ``crowding'', or a bias towards the center of mass of the anchors. 
We observe this fact in our tests, but also notice that the iterates of our first-order algorithm exhibit less centrality than the optimal value of \eqref{convex_snl_nsdp} returned by MOSEK.
In fact, terminating the algorithm early consistently provides slightly better estimates than $\bar{X}$.
Our observation suggests this is a result of the fact that the PSD constraints are only partially satisfied at early termination.
\begin{figure}[h!]
    \centering
    \begin{subfigure}{.49\textwidth}
    \centering        
    \includegraphics[width=.78\textwidth]{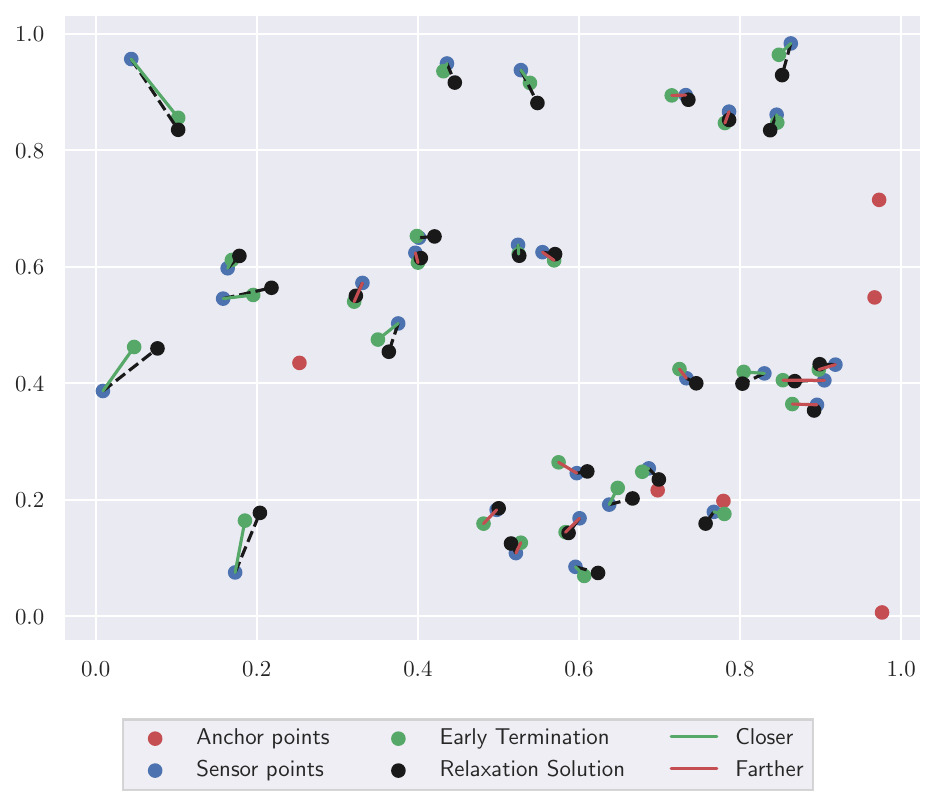}
    \caption{Early termination locations}
    \label{fig:central_plot}
\end{subfigure}%
\hspace{.019\textwidth}%
\begin{subfigure}{.49\textwidth}
    \centering        
    \includegraphics[width=\textwidth]{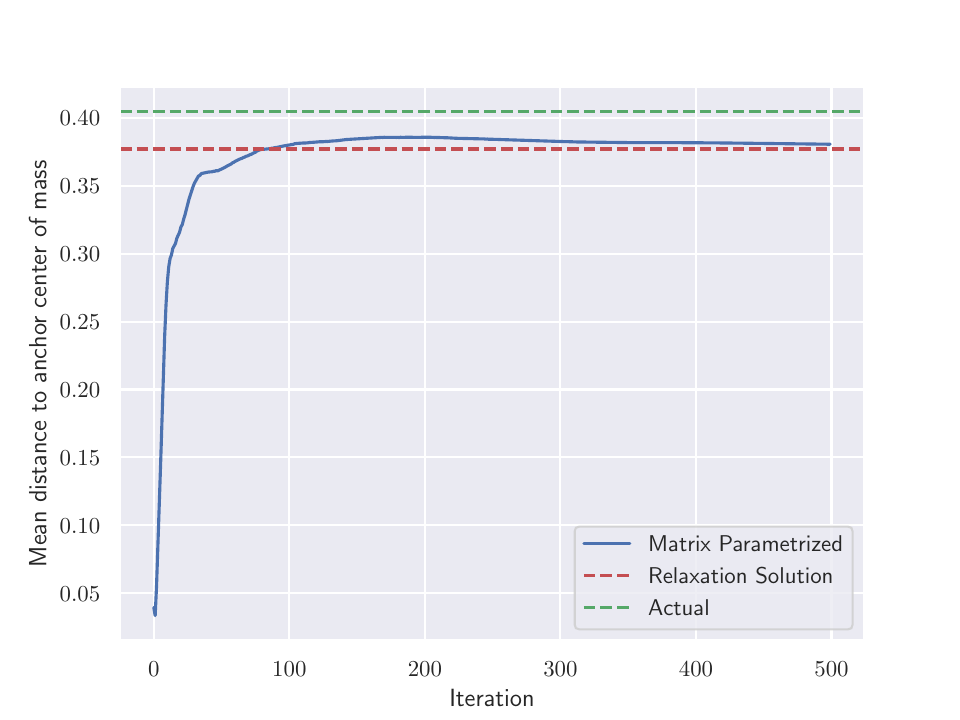}
    \caption{Mean central tendency of early solution}
    \label{fig:mean_central_plot}
\end{subfigure}
\caption{Early termination solution centrality}
\end{figure}

Figure \ref{fig:central_plot} illustrates the benefit of early termination graphically, depicting the true locations and estimated locations of sensors and anchors in a sample experiment.
The black dashed lines connect the true sensor locations (in blue) and the locations given by the relaxation solution $\bar{X}$ (in black).
The solid lines connect the true locations and the locations given by early termination of Algorithm \ref{n_iteration} (in green). 
We use solid green lines when the early termination location is closer than the relaxation solution, and red when they are not.
The figure shows the tendency of the early termination locations to be closer to the true location, and less biased toward the center.

Figure \ref{fig:mean_central_plot} shows the mean value over 20 trials of the centrality of the Algorithm \ref{n_iteration} location values at each iteration relative to the centrality of the true locations and that of $\bar{X}$, the solution to \eqref{convex_snl_nsdp}.
We define centrality here as $\frac{1}{n}\sum_{i=1}^n\|\hat{X}_i - \bar{a}\|$ where $\bar{a} = \frac{1}{m}\sum_{k=1}^m a_k$. 
This chart shows the tendency of $\bar{X}$ to concentrate nearer the anchors' center of mass than the true locations, as well as the iterates approaching the true locations before receding back to the solution of the relaxation.

We examine a variety of possible early stopping criteria to take advantage of the tendency of the pre-convergence iterates to be closer to the true locations than the final value.
We find that the minimum objective value (before the increasing weight of the PSD constraints forces it higher) provides a strong indication that we have reached the iterate closest to the true locations.
The maximum value of the centrality and the minimum value of $\frac{1}{n}\sum_{i=1}^n \hat{Y}_{ii} - \|\hat{X}_i\|^2$ (which \cite{biswas2004semidefinite} notes has an interpretation as the mean variance of the norm of the sensor locations taken as a random variable) also show correlation with the ideal stopping condition, but not unambiguously enough to make them good stopping criteria by themselves.
The magnitude of the changes in $\hat{X}_i$, $\hat{Y}_i$, and $v_i$ did not offer a strong signal for identifying the minimum distance iteration.
Given the tendency to underperform if the termination is too early, we track the objective value at each iteration, and terminate once the last 100 iterations have been higher that the lowest objective value observed to that point. 
Figure \ref{fig:early_stopping_histogram} shows a density estimate for the distribution of the mean distance of each sensor from its true location ($\frac{1}{n}\sum_{i=1}^n||\hat{X}_i - X_{0i}||_2$) across 300 tests of the objective value early termination criterion relative to the solution $\bar{X}$, showing that early stopping not only reduces the number of required iterations of Algorithm \ref{n_iteration}, but does so while slightly reducing the estimation error.
Figure \ref{fig:paired_early_stopping_histogram} shows a histogram of the paired difference between the mean distances from the true locations of the early termination iterate and the relaxation solution, showing the early termination solution is closer to the true locations in 64\% of cases.

\begin{figure}[h!]
    \centering
    \begin{subfigure}[T]{.49\textwidth}
    \centering        
    \includegraphics[width=\textwidth]{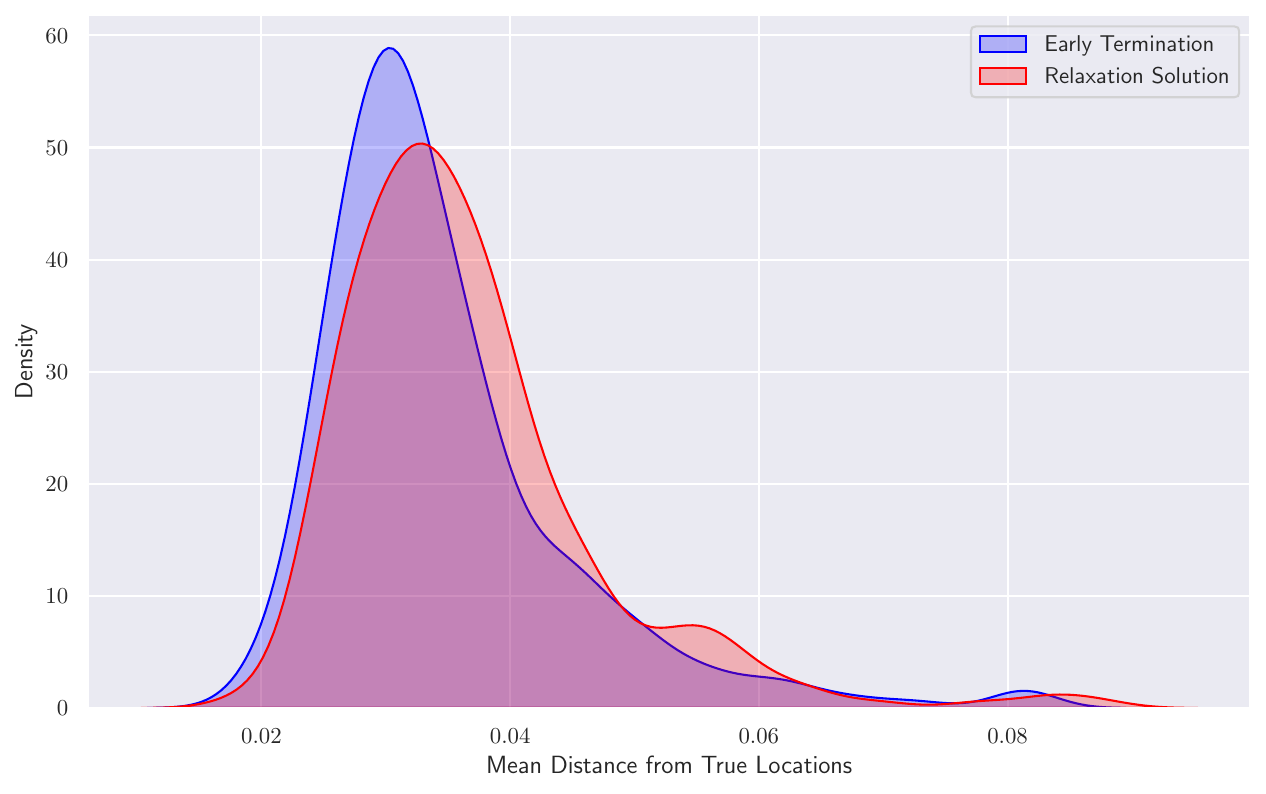}
    \caption{Early termination and IP mean distances}
    \label{fig:early_stopping_histogram}
\end{subfigure}%
\hspace{.01\textwidth}%
\begin{subfigure}[T]{.49\textwidth}
    \centering        
    \includegraphics[width=\textwidth]{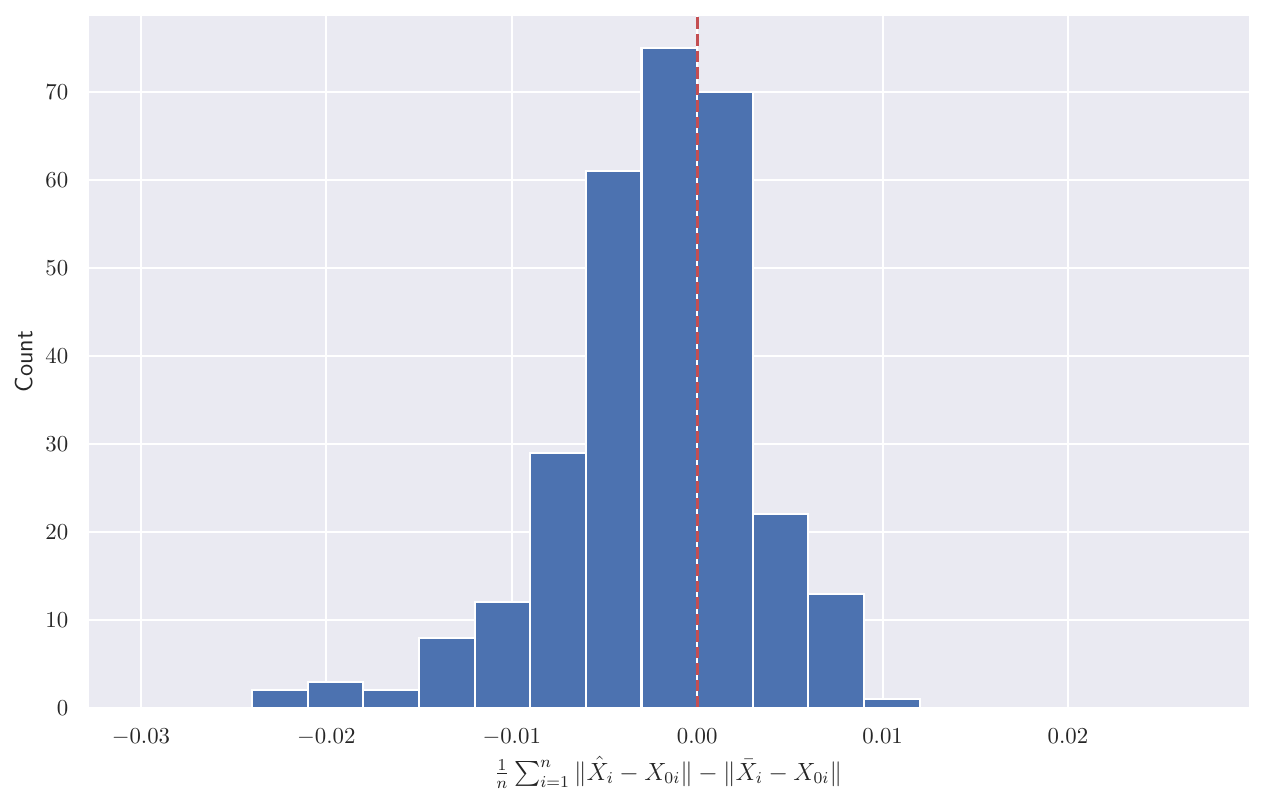}
    \caption{Paired differences}
    \label{fig:paired_early_stopping_histogram}
\end{subfigure}
\caption{Early termination performance}
\label{fig:early_term}
\end{figure}

\section{Conclusion}

This work demonstrates the efficacy and practical application of the decentralized matrix parametrized proximal splitting algorithm \cite{bassett2024optimaldesignresolventsplitting} on the noisy sensor network localization problem.
We present a novel efficient and decentralized approach to matrix parameter selection through the use of the Sinkhorn-Knopp algorithm, and show that, with these parameters, the matrix parametrized proximal splitting algorithm has performance similar to the much more expensive SDP-based matrix selection approaches in \cite{bassett2024optimaldesignresolventsplitting}. 
This innovation extends the applicability of the matrix parametrized proximal splitting algorithm to decentralized environments which do not have communication structures known in advance.
We compare the convergence of our decentralized splitting algorithm to decentralized ADMM algorithm over the same communication structure, demonstrating improved performance in cold start settings and when warm starting the algorithm using an a-priori estimate of sensor positions. Finally, we observe that in many cases, the iterates for the matrix parametrized proximal splitting drive the location estimates closer to the true location than the estimate provided at the relaxation's optimal value, noting that only approximate satisfaction of the PSD constraints in the node-based relaxation of the sensor network localization problem is required to achieve the minimal distance from true locations. To take advantage of this observation, we propose an early stopping criterion which produces a more accurate estimator of the sensor locations than the solution to the node-based relaxation of the sensor network localization problem.

\textbf{Acknowledgements} Both authors acknowledge support from Office of Naval Research awards N0001425WX00069 and N0001425GI01512.

\bibliography{files/thebib}%

\appendix 
\section{Appendix}\label{Appendix}
\subsection{Improved Matrix Design SDP}\label{improved_sdp}
We introduce two improved SDP formulations for matrix parameter selection.
The first improves the maximum Fiedler value formulation in \cite{bassett2024optimaldesignresolventsplitting}.
The second reduces this to a feasibility problem.
Our improved formulations rely on three primary changes.
First, setting $Z=W$ and removing the $W$ variable, thereby satisfying \eqref{Z_succeq_W} ($Z \succeq W$) directly.
Second, creating edge weight variables only over available edges and using the adjacency matrices to create a $Z$ matrix which satisfies \eqref{sum_Z_1} ($Z\1=0$) and adherence to the graph by construction.
And third, by leveraging our knowledge of the smallest eigenvector of $Z$ to find the second-smallest eigenvalue (the Fiedler value) of $Z$ without resorting to additional PSD constraints and variables. 
This allows us to both satisfy the null space condition \eqref{null_W_1} and maximize the Fiedler value.

The authors of \cite{bassett2024optimaldesignresolventsplitting} provide the following formulation for the maximum Fiedler value problem:
\begin{subequations}\label{main_prob}
    \begin{align}
    \max_{W, Z, c} \quad &c \label{obj}\\
    \mathrm{subject\, to} \quad& W \1 = 0 \label{con1}\\
    & \lambda_{1}(W) + \lambda_{2}(W) \geq c \label{con2}\\
    & Z - W \succeq 0 \label{con3}\\
    & \1^{T} Z \1 = 0  \label{con4}\\
    & \diag(Z) = 2 \label{con6}\\
    & (W, Z) \in \mathcal{C} \label{con7}\\
    & W \in \Sp^n, \; Z \in \mathbb{S}^{n}. \label{con8}
    \end{align}
\end{subequations}
In this formulation \eqref{main_prob}, we have a constraint set in \eqref{con7} to enforce the adherence to the graph, and constraints \eqref{con1} and \eqref{con2} together enforce the null space condition on $W$ \eqref{null_W_1} and provide the Fiedler value ($c$) for maximization.
In \cite{bassett2024optimaldesignresolventsplitting}, the authors note that constraint \eqref{con2} can be implemented with auxiliary variables $s \geq 0$ and $Y \in \mathbb{S}_{+}^{n}$ and the additional constraints
\begin{align}
W + Y - s \I &\succeq 0\\
2 s - \mathrm{Tr}(Y) & \geq c.
\end{align}
We show, however, that these auxiliary variables are unnecessary given the structure of $W$.

We begin with the shift to an edge weight variable construction for $Z$, following the framework in \cite{boyd2006convex}.
We define $E \in \R^{d \times n}$ with rows $e_{ij} = e_i - e_j$ for each feasible edge $(i,j)$ and $\v{z} \in \R_+^d$ as a decision variable for the edge weights.
The matrix $Z = E^T \v{z} E$ then has $Z_{ij} = -z_k$ where $z_k$ is the edge weight for the edge between nodes $i$ and $j$ if it exists in the graph, otherwise 0.
Entries $Z_{ii}$ total the sum of the edge weights for node $i$.
We know that the eigenvalue associated with the ones vector is therefore 0.
We also know that $\sum_{i=1}^n \lambda_i(Z) = 2n$, so the largest possible value for $\lambda_2$ must be no larger than $\frac{2n}{n-1}$. 
For all $n \geq 2$, we therefore have $\lambda_2 \leq 4$.
Therefore $\lambda_1(Z + 4 \1\1^T) = \lambda_2(Z)$.

Constraint \eqref{lambda2} in maximization problem \eqref{maxfiedler} therefore sets $c$ as the second-smallest eigenvalue of $Z$ and maximizes it.
Whenever a connected graph is feasible, problem \eqref{maxfiedler} with constraint \eqref{lambda2} will select the graph Laplacian of a connected graph.
The maximum Fiedler value objective value can then be formulated as:
\begin{subequations}
\label{maxfiedler}
\begin{align}
    \max_{\v{z}, c} & \quad c \nonumber \\
    \text{s.t.} & \quad E^T \v{z} E + 4 \1\1^T \succeq c\I \label{lambda2}\\
    & \quad \diag( E^T \v{z} E) = 2\1.
\end{align}
\end{subequations}
If we fix $c = 2(1-\cos{\frac{\pi}{n}})$ as in \cite{bassett2024optimaldesignresolventsplitting}, we can modify \eqref{maxfiedler} into an even simpler feasibility problem.
\begin{align}
    \label{feasibility}
    \max_{\v{z}} & \quad 0 \nonumber \\
    \text{s.t.} & \quad E^T \v{z} E + 4 \1\1^T \succeq c\I  \\
    & \quad \diag( E^T \v{z} E) = 2\1.
\end{align}
In both \eqref{maxfiedler} and \eqref{feasibility}, we then return $Z = W = E^T \v{z} E$.

\end{document}